 \journalname{}
\begin{document}

\title{Structured two-point  stepsize gradient methods for nonlinear least squares%\thanks{Grants or other notes
%about the article that should go on the front page should be
%placed here. General acknowledgments should be placed at the end of the article.}
}
%\subtitle{Structured spectral gradient method}

%\titlerunning{Short form of title}        % if too long for running head

\author{Hassan Mohammad         \and
        Mohammed Yusuf Waziri %etc.
}

%\authorrunning{Short form of author list} % if too long for running head

\institute{Hassan Mohammad \at
              Department of Mathematical Sciences, Faculty of Physical Sciences, Bayero University, 700241 Kano,  Kano, Nigeria. \\
              %Tel.: +123-45-678910\\
             % Fax: +123-45-678910\\
              \email{hmuhd.mth@buk.edu.ng}             \\
             \emph{Department of Applied Mathematics, IMECC, University of Campinas, 13083-970 Campinas, SP, Brazil.} 
%of F. Author  %  if needed
           \and
          Mohammed Yusuf Waziri  \at
              Department of Mathematical Sciences, Faculty of Physical Sciences, Bayero University, 700241 Kano,  Kano, Nigeria.
}

\date{Received: date / Accepted: date}
% The correct dates will be entered by the editor

\maketitle
\begin{abstract}
In this paper, we present two choices of structured spectral gradient methods for solving nonlinear least-squares problems.  In the proposed methods, the scalar multiple of identity approximation of the Hessian inverse is obtained by imposing the structured quasi-Newton condition. Moreover, we propose a simple strategy for choosing the structured scalar in the case of negative curvature direction. Using the nonmonotone line search with the quadratic interpolation backtracking technique, we prove that these proposed methods are globally convergent under suitable conditions. Numerical experiment shows that the method is competitive with some recent developed methods.
\keywords{Nonlinear least-squares problems \and Spectral gradient method \and Nonmonotone line search.}
% \PACS{PACS code1 \and PACS code2 \and more}
 \subclass{65H10 \and 65K05 \and 90C56}
\end{abstract}

\section{Introduction}
\label{intro}
Nonlinear least-squares are special kinds of unconstrained optimization problems that involves the minimization of the norm of the squares of twice differentiable functions called \textit{the residual functions}. These kinds of problems are solved either by general unconstrained minimization methods or by specialized methods that take into account the special structure of the objective function. Nonlinear least-squares problems arise in many applications, such as data fitting, optimal control, parameter estimation, experimental design, and imaging problems.\cite{golub2003, kim07,li2012}

Most of the iterative methods for nonlinear least-squares problems requires computation and storage of the Hessian of the objective function. The Hessian, in this case, is the sum of two terms. The first term involves only the gradients of the functions, while the second involves the second-order derivatives of the functions. In practice, computing the complete Hessian is too expensive  in most practical problems (see, \cite{seven, sunyuan}) because it requires the exact second-order derivatives of the residual function, which are rarely available at a reasonable cost.  Thus, Newton-type methods that require exact Hessian are not suitable for such problems. As a consequence,  alternative approaches that make use of the first derivative information of the function have been developed (see, \cite{nazareth80} for details), such as the Gauss-Newton method (GN) and  Levenberg-Marquardt method (LM) \cite{hartley61,levenberg44, marquardt63}. Notwithstanding, both GN and LM methods neglect the second-order part of the Hessian of the objective function  (\ref{eq2}). As a result, they are expected to perform well when solving zero residual problems (in this case the residual at the minimizer is zero). Nevertheless, when solving large residual problems, these methods may perform poorly \cite{seven}. The requirement of computing the Jacobian for the methods mentioned above also contributes to their inefficiency, in the case of large-scale problems. Thus, Jacobian-free approaches are needed for solving such cases (see, for example, \cite{knoll04, xu2012, xu2016}). Structured quasi-Newton methods (SQN) are initialized in \cite{dennis71, dennis73} to overcome the difficulties that arise when solving large residual problems using GN or LM methods. These methods combine Gauss-Newton and Quasi-Newton methods in order to make good use of the structure of the Hessian of the objective function. 

Many studies have been made to the development of the methods for minimizing the sum of squares of nonlinear functions (see, for example, \cite{albaali03, albaali85, bartholomew77, betts76, fletcher87, lukvsan96, keown75, nazareth80} and reference therein). Structured quasi-Newton methods show compelling and improved numerical performance compared to the classical methods \cite{spedicato88, wangliqi10}. However, their performance on large-scale problems is not encouraging because they require a considerable large amount of matrix storage. Based on this drawback, matrix-free approaches are preferable for solving large-scale nonlinear least-squares problems.

The two-point stepsize gradient method (also called the spectral gradient or Barzilai-Borwein method) is a matrix-free method for general unconstrained optimization problems. This method was introduced by Barzilai and Borwein \cite{bb1} as a type of gradient method where the choice of the stepsize along the negative gradient is obtained from a two-point approximation to the secant equation associated with the quasi-Newton approximation of the Hessian of the objective function. Due to its simplicity in the Hessian approximation and its efficiency, the spectral gradient method has received much attention in the past decades.

Barzilai and Borwein analyzed their method in two-dimensional quadratic functions. Later Raydan \cite{bb2} proved the global convergence of the spectral gradient method for $n$ dimensional strictly convex quadratic functions. Raydan \cite{bb3}, incorporated the Barzilai Borwein gradient method in a globalization scheme using a nonmonotone line search strategy of Grippo et. al. \cite{bb4} (GLL) for general unconstrained optimization problems. Dai and Zhang \cite{daizhang01} proposed an adaptive two-point stepsize gradient method that adaptively chooses the parameter on which the nonmonotone globalization of the spectral gradient algorithm depends. Zhang et. al. \cite{zhang1999}, proposed a modification of the famous secant equation as $B_{k+1}s_k=\bar{y}_k$, where $\bar{y}_k$ contains some information of gradients at the last two iterates and function values, different from the usual secant equation in which $y_k$ is the difference between the last two iterates only.  In \cite{bb6},  Dai et. al. presented two choices of the modified two-point stepsize gradient methods using interpolation strategy. These choices of stepsize given by Dai et.al. exploits the function and gradient information at the current and previous step, and this feature improved the performance of the classical choices of the spectral gradient parameter.  

In view of the above developments,  in this paper, we focus our study on the special structure of the objective function of the nonlinear least squares and motivated by the success of the spectral gradient method introduced by Barzilai and Borwein in \cite{bb1}, the global Barzilai and Borwein gradient method presented by Raydan in \cite{bb3} for general unconstrained optimization problems, and the nonmonotone globalization strategy of Zhang and Hager \cite{zhang04}. We proposed alternative approximations of the complete Hessian using a structured spectral parameter and obtained Hessian-free algorithm, achieved by exploiting the special structure of the Hessian of the objective function.

The remaining part of this paper is organized as follows. In Section 2 we discuss some preliminaries. Section 3 describes the general algorithm of the proposed methods, and the global convergence results. Numerical experiments are provided in Section 4. Concluding remarks and future works are given in Section 5. Unless otherwise stated, throughout this paper $\|.\|$ stands for the Euclidean norm of vectors, and $\|.\|_{F}$ stands for the Frobenius norm of matrices. $g_k=\nabla g(x_k), F_k=F(x_k),$ and $f_k=f(x_k)$.

\section{Preliminaries}

Consider the problem of determining a minimizer $x^* \in \mathbb{R}^n$ of the nonlinear-least squares problem (LS) of the form
\begin{equation}\label{eq2}
\displaystyle \min _{x\in \mathbb{R}^n}f(x), ~ f(x)=\frac{1}{2}\|F(x)\|^{2}=\frac{1}{2}\sum _{i=1}^{m}(F_{i}(x))^2.
\end{equation}
Here $F:\mathbb{R}^n \to \mathbb{R}^m \text{ } (m\ge n)$, $F(x)=(F_{1}(x), F_{2}(x),...,F_{m}(x))^T$, $F_i: \mathbb{R}^n \to \mathbb{R}$. $F$ assumed to be twice continuously differentiable function. 

The gradient and Hessian of the objective function (\ref{eq2}) have a special form and are given by
\begin{equation}\label{eq3}
\nabla f(x)=\sum _{i=1}^m F_{i}(x) \nabla F_i(x)=J^TF,
\end{equation}
\begin{align}\label{eq4}
\nabla ^2f(x)& =\sum  _{i=1}^m \nabla F_{i}(x)\nabla F_{i}(x)^T+\sum_{i=1}^m F_{i}(x)\nabla ^2F_{i}(x)\\
&= J^TJ+S,
\end{align}
where $F_{i}$ is the $i$-th component of $F$ and $\nabla ^2F_{i}(x)$ is its Hessian, $J=\nabla F^T$ is the Jacobian of the residual function $F$ at $x$ and $S$ is a square matrix representing the second term of (\ref{eq4}).

In \cite{bb1} Barzilai and Borwein (BB), a two-point step size gradient method for unconstrained optimization problem was developed. Considering the solution of the problem
\begin{equation}\label{eq9}
\min _{x\in \mathbb{R}^n}f(x),
\end{equation}
where $f:\mathbb{R}^n\to \mathbb{R}$ is a continuously differentiable function. The BB method is iteratively given by
\begin{equation}\label{eq10}
x_{k+1}=x_k+s_k, 
\end{equation}
where $s_k=-\alpha _{k}g_k,$ and the scalar $\alpha _k$ (called the spectral parameter) is regarded as the approximation of the Hessian inverse of $f$ at $x_k$. Let $D_k=\alpha _kI$, imposing quasi-Newton condition on $D_k$, such that $D_k^{-1}s_{k-1}\approx y_{k-1}$, Barzilai and Borwein considered minimizing $\|D_k^{-1}s_{k-1}-y_{k-1}\|$ and obtained
\begin{equation}\label{eq11}
 \qquad \alpha ^{1} _{k}=\frac{s_{k-1}^Ts_{k-1}}{s_{k-1}^Ty_{k-1}}, \qquad k\ge 1.
\end{equation}
%where $s_{k-1}=x_{k}-x_{k-1},\text{ } y_{k-1}=g_{k}-g_{k-1}.$
They also consider another choice of minimizing $\|s_{k-1}-D_ky_{k-1}\|$ and obtained
\begin{equation}\label{eq11b}
 \qquad \alpha ^{2} _{k}=\frac{s_{k-1}^Ty_{k-1}}{y_{k-1}^Ty_{k-1}}, \qquad k\ge 1,
\end{equation}
where $s_{k-1}=x_{k}-x_{k-1},\text{ } y_{k-1}=g_{k}-g_{k-1}.$ 

We have noticed some further studies on the spectral gradient methods for unconstrained optimization in \cite{daizhang01, bb10,  biglari13, liu17, liu18, curtis16, kafaki13} and reference therein. However, we believe that the classical choices of the stepsize need to be explored in order to understand the basic idea associated with the study of the structured spectral gradient methods for nonlinear least squares problems. Therefore, in this paper, we based our study on the two choices (\ref{eq11}) and (\ref{eq11b}) used in \cite{bb1} and proposed two structured choices of spectral parameters for solving nonlinear least-squares problems.

%%%%%%%%%%%%%%Description of the method%%%%%%%%%%%%%%%%%%%%%

\section{Structured spectral algorithm and convergence result}

Let $H_k, ~~ k\ge 1$ be the approximation of the structured Hessian (\ref{eq4}) such that $H_k$ satisfy the quasi-Newton equation
\begin{equation}\label{sqnc1}
    H_ks_{k-1}=z_{k-1},
\end{equation}
where $z_{k-1}$ is a structured vector different from $y_{k-1}$ that can be determine as follows:

By Taylor's series expansion of $F_i(x_{k-1})\text{ and }\nabla F_i(x_{k-1})$ at current step $x_k$ neglecting the higher order terms, we have
\begin{equation}\label{eqts1} F_i(x_{k})-F_i(x_{k-1})\approx \nabla F_i(x_{k})^Ts_{k-1},\end{equation}
and
\begin{equation}\label{eqts2} \nabla F_i(x_{k})- \nabla F_i(x_{k-1})\approx  \nabla ^2F_i(x_{k})s_{k-1}.\end{equation}
Pre-multiply  (\ref{eqts1}) and (\ref{eqts2}) by  $\nabla F_i(x_{k})\text{ and }F_i(x_{k})$ respectively, summing from $i=1\text{ to }m$ we have

\begin{equation}\label{eqts11}  \sum _{i=1}^m\nabla F_i(x_{k})\bigl(F_i(x_{k})-F_i(x_{k-1})\bigr)\approx  \sum _{i=1}^m\nabla F_i(x_{k})\nabla F_i(x_{k})^Ts_{k-1},\end{equation}
and
\begin{equation}\label{eqts21}  \sum _{i=1}^m F_i(x_{k})\bigl(\nabla F_i(x_{k})- \nabla F_i(x_{k-1})\bigr)\approx \sum _{i=1}^m F_i(x_{k}) \nabla ^2F_i(x_{k})s_{k-1}.\end{equation}
Adding  (\ref{eqts11}) and (\ref{eqts21})
%\begin{equation*}\label{eqts32}
\begin{align*}
    & \sum _{i=1}^m\nabla F_i(x_{k})\nabla F_i(x_{k})^Ts_{k-1}+\sum _{i=1}^m F_i(x_{k}) \nabla ^2F_i(x_{k})s_{k-1}\\
    & \approx \sum _{i=1}^m\nabla F_i(x_{k})\bigl(F_i(x_{k})-F_i(x_{k-1})\bigr)+ \sum _{i=1}^m F_i(x_{k})\bigl(\nabla F_i(x_{k})- \nabla F_i(x_{k-1})\bigr)
\end{align*}
%\end{equation*}
We obtain the structured quasi-Newton condition given by
\begin{equation*}
    H_ks_{k-1}=z_k, 
\end{equation*}
where
\begin{equation}\label{sqnc11}
    z_k=J_k^T(F_k-F_{k-1})+(J_k-J_{k-1})^T F_k=2g_k-({r_{k}}+{r_{k-1}}),
\end{equation}
where $r_k=J_k^TF_{k-1},~~ r_{k-1}=J_{k-1}^TF_k$, are called \emph{structured vectors}.

Notice that unlike $y_k$, the vector $z_k$ is the difference between twice the current gradient vector and the sum of two structured gradient vectors. These two structured gradient vectors are special because $r_k$ is the product of the Jacobian transpose at $x_k$ and the residual function at $x_{k-1}$, while $r_{k-1}$ is the product of the Jacobian transpose at $x_{k-1}$ and the residual function at $x_k$. Now we are ready to give our choices of structured spectral parameters. 

Consider the following unconstrained minimization problem 
\[\min \|D_k^{-1}s_{k-1}-z_{k-1}\|=\min _{\alpha \in \mathbb{R}}\|\frac{1}{\alpha _k}s_{k-1}-z_{k-1}\|, ~~ k\ge 1, \]
and 
\[\min \|s_{k-1}-D_kz_{k-1}\|=\min _{\alpha \in \mathbb{R}}\|s_{k-1}-\alpha z_{k-1}\|, ~~ k\ge 1, \]
where $z_{k-1}$ is given by (\ref{sqnc1}). The solution of the two unconstrained minimization problems are 
\begin{equation}\label{ssp1}
   \alpha ^{*} _{k}=\frac{s_{k-1}^Ts_{k-1}}{s_{k-1}^Tz_{k-1}}.
\end{equation}

\begin{equation}\label{ssp2}
   \tilde {\alpha} _{k}=\frac{s_{k-1}^Tz_{k-1}}{z_{k-1}^Tz_{k-1}}.
\end{equation}
Notice that the only difference between  choices (\ref{ssp1})-(\ref{ssp2}) and (\ref{eq11})-(\ref{eq11b}) is that the former neglects the structure, while the latter exploit the structure of the Hessian approximation.

In order to avoid the negative curvature direction, the term $s_{k-1}^Tz_{k-1}$ appearing in the two choices of the structured stepsizes must be strictly positive, i.e. $s_{k-1}^Tz_{k-1}>0$, which is not the case all the times. In most of the variants of spectral gradient method, the negative curvature occurrence is prevented by simply setting the stepsize to a large positive number say $\lambda _{max}$ (\textit{classical approach}). This choice of safeguard may cause additional demand for backtrackings in search of suitable steplength for some test functions (see the included numerical experiments). In this regard, Curtis and Gou in \cite{curtis16} investigated another safeguard for nonpositive curvature with the emphasis on nonconvex optimization problems.
For general unconstrained optimization, Luengo and Raydan \cite{luengo03} argued that the previous stepsize may provide some information for the current stepsize and presented a \textit{retarding technique} in such a way that if $s_{k-1}^Ty_{k-1}\le 0$, then the stepsize is set to $\alpha _k=\delta \alpha _{k-1},$ where $\delta$ is a positive parameter. In a recent paper \cite{liu17}, the authors adopt this technique and obtained a reasonable improvement in the performance profile based on the number of functions evaluations and CPU time, and a slight improvement in terms of the number of iterations. Even though the retarding technique was shown to be effective based on the numerical experiment in \cite{liu17}, we feel that there may be better choices for the stepsize in the case of negative curvature direction. In what follows, we present another different choice as a remedy for negative curvature direction.

The proposed structured spectral gradient parameters in (\ref{ssp1}) or (\ref{ssp2})  is negative if and only if
\begin{equation} \label{negcuv}
s_{k-1}^Tz_{k-1}\leq0, 
\end{equation}
in this case, it is better to choose a positive term that carries some information about the current stepsize. More specifically, a simple positive modification of the term $s_{k-1}^Tz_{k-1}$ may be the best choice. We now present a simple strategy for choosing a replacement for the term $s_{k-1}^Tz_{k-1}$ in case it is negative.

If $s_{k-1}^Tz_{k-1}\leq0$, then $s_{k-1}^Tz_{k-1}+\mu \ge 0,$ where $\mu$ is a positive number greater than or equal to $|s_{k-1}^Tz_{k-1}|$. 

Now, using the Cauchy-Schwartz inequality,
\[ |s_{k-1}^Tz_{k-1}|\le \|s_{k-1}\|\|z_{k-1}\| ~~~\forall s_{k-1},~z_{k-1} \in \mathbb{R}^n .\]
Therefore, $s_{k-1}^Tz_{k-1}< 0 \implies s_{k-1}^Tz_{k-1}+ \|s_{k-1}\|\|z_{k-1}\|\ge 0$.

In general, if $s_{k-1}z_{k-1}<0$, then we set its replacement term as
\begin{equation} \label{replc1}
    \tau _k= \max \bigl\{ \beta \alpha _{k-1} ,~s_{k-1}^Tz_{k-1}+ \|s_{k-1}\|\|z_{k-1}\|\bigr\},
\end{equation}
where $\alpha _{k-1}$ is the previous stepsize, $\beta$ is a positive real number. For the first choice of the stepsize, $\tau _k$ will replace the denominator while for the second choice of the stepsize $\tau _k$ will replace the numerator.  

Since this new choice of stepsize incorporate more information about the curvature $s_{k-1}^Tz_{k-1}$, it is reasonable to expect that it is more efficient than setting the stepsize to a large positive number or using the retarding technique only. Our preliminary numerical experiments support these expectations.   

To globalize our algorithm, we use the efficient nonmonotone line search by Zhang and Hager \cite{zhang04}. In this line search, if $d_k$ is the sufficiently descent direction of $f\text{ at }x_k$, then the step length $t$ satisfies the following Armijo-type condition:

\begin{equation}\label{zhls}
f(x_{k}+td_k)\le C_k+\ \gamma g_k^Td_k, \qquad \gamma \in (0,1),
\end{equation}
 where
\begin{equation}\label{eq15}
 C_0=f(x_0), \qquad  C_{k+1}=\frac{\eta _kQ_kC_k+f(x_{k+1})}{Q_{k+1}},
\end{equation}
where $Q_0=1$, $Q_{k+1}=\eta _{k}Q_{k}+1$, $\eta_{k}\in [0,1].$

The sequence $\{C_k\}$ is a convex combination of the function values $f(x_0), f(x_1),...$,\\ $f(x_k)$. The choice of the parameter $\eta _k$ controls the degree of monotonicity. If $\eta _k=0$ for each $k$, then the line search is purely monotone (Armijo-type), if not, it is nonmonotone.

Based on these choices of the structured spectral parameters and the choice of nonmonotone line search, we now formally present the step of the structured two-point stepsize gradient algorithm for nonlinear least-squares problems as follows:\\\\
\noindent \textbf{ Algorithm: Structured two-point stepsize gradient method (SSGM) }\\
\textbf{Step 0.}  Given $x_0\in \mathbb{R}^n, \lambda _{max}\ge \lambda _{min}>0, \gamma \in (0,1),  0\le \eta _{min}\le \eta _{max}\le 1,$  $ \varepsilon >0$  and $\beta >0$.  

Set $ C_0=f_0 , Q_0=1 ~ \lambda_0=1, ~  \text{ and }k=0$. \\
 \textbf{Step 1.} $\text{~If~} \|g_k\|\le \varepsilon $ stop. \\
\textbf{Step 2.} Set $d_k=-\lambda_kg_k.$ \\
\textbf{Step 3.} Nonmonotone line search 

\textbf{Step 3.1} Set $t=1$

\textbf{Step 3.2} If
 \begin{equation}\label{eqbk}
f(x_k+td_k)\le C_k+\gamma tg_k^Td_k,
\end{equation}

 holds, then set $t_k=t $ and compute the next iterate $x_{k+1}=x_k+t_kd_k.$ Go to Step 4.  
 
 Else  compute $t$ using quadratic interpolation.  \\
\textbf{Step 4.1} Compute the stepsize by one of the choices (\ref{ssp1}) and (\ref{ssp2}). If the stepsize is less than zero,

then use (\ref{replc1}) to replace the negative term according to the choice of the stepsize.\\
\textbf{Step 4.2} Compute  
\begin{equation}\label{finallm}
\lambda _{k}=\min \bigl \{\max \bigl \{ \alpha ^{c}_{k}, \lambda _{min}\bigr\},\lambda _{max}\bigr \},
\end{equation}
where $\alpha ^{c}_k$ is the choice of the stepsize.\\
\textbf{Step 5.}
Choose $\eta _k\in [\eta _{min}, \eta _{max}]$ and compute $Q_{k+1} \text{ and }C_{k+1}$ using Equation (\ref{eq15}).\\
\textbf{Step 6.} Let $k=k+1$ and go to \textbf{Step 1}.\\

\noindent \textbf{Remarks}
\begin{itemize}
\item [1.] The reasons behind computing $\lambda _k$ using (\ref{finallm}) in Step 4 are
\begin{itemize}
\item [i.] to avoid unwanted uphill direction, and;
\item [ii.] to ensure that the sequence $\{\lambda _k\}$ is uniformly bounded for each $k$. It is clear that $\lambda _{min}\le \lambda _k\le \lambda _{max}~\forall k$.
\end{itemize}
\item [2.] The search direction $d_k=-\lambda _kg_k$ satisfies the direction assumptions \cite{zhang04}:
\begin{itemize}
\item [i.] $g_{k}^Td_k\le -c_1\|g_k\|^2$, and
\item [ii.] $\|d_k\|\le c_2\|g_k\|$,  for all k, where $c_1,c_2$ are positive constants.
\end{itemize}
\item [3.] The stepsize can be chosen as any of the stepsize choices, and a different choice entails a different method. We denote by SSGM1 the algorithm with the stepsize choice (\ref{ssp1}) and by SSGM2 the algorithm with the stepsize choice (\ref{ssp2}).
\item[4.] For computing the gradient and the structured gradients we take advantage of the structure of the problem and obtained the gradient vector as a matrix-vector product without explicit knowledge of the Jacobian matrix.
\end{itemize}
%%%%%%%%Convergence Results%%%%%%%%%%%%%%%%%%%%%%%%

We now turn to analyze the global convergence of SSGM algorithm. First we state the following assumptions:\\
\textbf{Assumption 1}

The level set $\mathcal{L}=\{ x\in \mathbb{R}^n : f(x)\le f(x_0)\}$ is bounded.\\
\textbf{Assumption 2}
\begin{itemize}
    \item [(a)]  In some neighborhood $\mathcal{N}$ of $ \mathcal{L}$, $f$ is continuously differentiable, and  \item [(b)] the Jacobian $J(x)=\nabla F(x)^T$ is Lipschitz continuous on $\mathcal{L}$.
\end{itemize}
Also, we get from the above assumptions that there are $l_2, \mu _1, \mu _2 \mu _3 $ positive such that

\begin{equation}\label{assumpa}
\|J(x)\|<\mu _1, ~~\|F(x)\|< \mu _2, 
\end{equation}
\begin{equation}\label{assumpb}
   \|F(x)-F(y)\|\le l_2\|x-y\|, \text{ and }
\end{equation}
\begin{equation}\label{assumpc}
\|g(x)-g(y)\|\le l_3\|x-y\|, \quad \|g(x)\|\le \mu _3 ~~ \forall x,y \in \mathcal{L}. 
\end{equation}
%%%%%%Lemma1%%%%%%%%%%%%%%%%%%%%%%%
\begin{lemma}\label{lm1}
If the Assumptions 1 and 2 holds, then there exists $M>0$, such that 
\begin{equation}\label{eqboundz}
   \|z_k\|\le M\|s_k\|. 
\end{equation}
\end{lemma}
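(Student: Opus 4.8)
The plan is to work directly from the explicit expression for $z_k$ in (\ref{sqnc11}), namely
\[z_k = J_k^T(F_k - F_{k-1}) + (J_k - J_{k-1})^T F_k,\]
and to bound its norm termwise. First I would apply the triangle inequality to split $\|z_k\|$ into the contribution of the first summand $J_k^T(F_k - F_{k-1})$ and that of the second summand $(J_k - J_{k-1})^T F_k$, and then estimate each piece using the uniform bounds collected in (\ref{assumpa})--(\ref{assumpc}) together with the Lipschitz continuity of the Jacobian guaranteed by Assumption 2(b).

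For the first summand, I would use the operator-norm bound $\|J_k^T\| = \|J_k\| < \mu_1$ from (\ref{assumpa}) and the Lipschitz estimate (\ref{assumpb}), $\|F_k - F_{k-1}\| \le l_2 \|x_k - x_{k-1}\| = l_2 \|s_{k-1}\|$, to get $\|J_k^T(F_k - F_{k-1})\| \le \mu_1 l_2 \|s_{k-1}\|$. For the second summand, I would let $l_1$ denote the Lipschitz constant of $J$ supplied by Assumption 2(b), so that $\|J_k - J_{k-1}\| \le l_1 \|s_{k-1}\|$, and combine this with $\|F_k\| < \mu_2$ from (\ref{assumpa}) to obtain $\|(J_k - J_{k-1})^T F_k\| \le l_1 \mu_2 \|s_{k-1}\|$. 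Adding the two estimates yields $\|z_k\| \le (\mu_1 l_2 + l_1 \mu_2)\|s_{k-1}\|$, so the claim follows with the choice $M := \mu_1 l_2 + l_1 \mu_2$, which is finite and strictly positive under Assumptions 1 and 2.

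This argument is essentially a bookkeeping exercise, so I do not expect any deep obstacle; the only points that require care are (i) making explicit the Jacobian Lipschitz constant $l_1$, which is only implicitly asserted in Assumption 2(b) and is not among the constants named in (\ref{assumpa})--(\ref{assumpc}), and (ii) reconciling the index in the stated inequality, since the displayed form of $z_k$ is manifestly built from the step $s_{k-1} = x_k - x_{k-1}$ rather than from $s_k$. Reading the conclusion with the natural convention that the bound is applied at the step that produces $z_k$ (equivalently, shifting indices so that $z_k$ is paired with $s_{k-1}$), the estimate above delivers precisely the uniform constant $M$ asserted by the lemma.
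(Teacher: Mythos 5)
Your proof is correct and follows essentially the same route as the paper's own: triangle inequality on the two summands of $z_k$, then the bounds $\|J_k\|<\mu_1$, $\|F_k\|<\mu_2$, the Lipschitz estimate for $F$, and the Jacobian Lipschitz constant $l_1$, yielding exactly the same constant $M=\mu_1 l_2+l_1\mu_2$. Your two side remarks are well taken — the paper indeed uses $l_1$ without naming it among its constants, and its proof silently writes $\|s_k\|$ where the definition of $z_k$ in (\ref{sqnc11}) pairs naturally with $s_{k-1}$ — so your index-shift reading is the careful way to state what the paper asserts.
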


\begin{proof}
\begin{equation*}
\begin{split}
\|z _k\|&=\|J_k^T(F_k-F_{k-1})+(J_k-J_{k-1})^TF_k\|\\
&\le\|J_k^T(F_k-F_{k-1})\|+\|(J_k-J_{k-1})^TF_k\| ~~\text{by triangular inequality },\\
& \le \|J_k\|\|(F_k-F_{k-1})\|+\|(J_k-J_{k-1})\|\|F_k\| ~~\text{induced matrix norm property },\\
& \le \mu _1l_2\|s_k\|+l _1\mu _2\|s_k\| ~~\text{from assumption $2(a)$, and Equations (\ref{assumpa}) - (\ref{assumpc}), }\\
&= (\mu _1l_2+l _1\mu _2)\|s_k\|.
\end{split}
\end{equation*}
This implies that there exists $M= \mu _1l_2+l _1\mu _2>0$ such that (\ref{eqboundz}) holds.
\end{proof}

%%%%%%%%%%%%%%Lemma2%%%%%%%%%%%%%%%%%%%%%%%%
\begin{lemma} \label{lm2}
The computed stepsize $\lambda _k$ is positive and bounded for all $k$. 
\end{lemma}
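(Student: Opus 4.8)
The plan is to argue by induction on $k$, reducing everything to the clamping formula (\ref{finallm}). Observe that the computed stepsize satisfies $\lambda_k=\min\{\max\{\alpha_k^c,\lambda_{min}\},\lambda_{max}\}$, so once I show that the intermediate quantity $\alpha_k^c$ is a well-defined positive number, the outer $\max$ with $\lambda_{min}>0$ forces $\lambda_k\ge\lambda_{min}>0$ and the outer $\min$ with $\lambda_{max}$ forces $\lambda_k\le\lambda_{max}$; hence $\lambda_{min}\le\lambda_k\le\lambda_{max}$, which is precisely positivity together with boundedness (this is also what Remark~1(ii) asserts). The base case is supplied by Step~0, where $\lambda_0=1>0$. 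The inductive hypothesis I carry forward is simply that the previous stepsize is positive, say $\lambda_{k-1}\ge\lambda_{min}>0$.

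For the inductive step I would verify positivity of $\alpha_k^c$ by cases, according to whether SSGM1 or SSGM2 is used and according to the sign of the curvature term $s_{k-1}^Tz_{k-1}$. In the benign case $s_{k-1}^Tz_{k-1}>0$, the quotient (\ref{ssp1}) is positive because its numerator equals $\|s_{k-1}\|^2>0$, and the quotient (\ref{ssp2}) is positive because its denominator equals $\|z_{k-1}\|^2>0$; in both the sign is inherited directly from $s_{k-1}^Tz_{k-1}$. In the adverse case $s_{k-1}^Tz_{k-1}\le 0$, Step~4.1 replaces the offending term by $\tau_k$ from (\ref{replc1}). Here the key observation is that $\tau_k=\max\{\beta\alpha_{k-1},\,s_{k-1}^Tz_{k-1}+\|s_{k-1}\|\|z_{k-1}\|\}\ge\beta\alpha_{k-1}>0$, using $\beta>0$ and the inductive hypothesis that the previous stepsize is positive. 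Substituting $\tau_k$ for the denominator of (\ref{ssp1}) or for the numerator of (\ref{ssp2}) then yields $\alpha_k^c>0$ in every case.

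The only genuine obstacle is the well-definedness of these ratios, i.e.\ ruling out a vanishing denominator. For (\ref{ssp1}) after the safeguard the denominator is either $s_{k-1}^Tz_{k-1}>0$ or $\tau_k>0$, so it is strictly positive and no division by zero occurs, while its numerator $\|s_{k-1}\|^2$ is positive because $s_{k-1}=t_{k-1}d_{k-1}=-t_{k-1}\lambda_{k-1}g_{k-1}\ne 0$ whenever the algorithm has not terminated (so $g_{k-1}\ne 0$ and $t_{k-1}>0$). For (\ref{ssp2}) the denominator $\|z_{k-1}\|^2$ is positive exactly when $z_{k-1}\ne 0$; in that nondegenerate case the argument above gives $\alpha_k^c>0$, and the degenerate configuration $z_{k-1}=0$ is absorbed by the clamp (\ref{finallm}), which then returns $\lambda_k=\lambda_{max}\in[\lambda_{min},\lambda_{max}]$. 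Collecting the cases yields a well-defined $\alpha_k^c>0$, and the clamping argument of the first paragraph closes the induction, establishing $\lambda_{min}\le\lambda_k\le\lambda_{max}$ for all $k$, as required.
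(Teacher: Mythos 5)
Your proof is correct and follows the same skeleton as the paper's: base case $\lambda_0=1$, a case split over the two choices (\ref{ssp1})--(\ref{ssp2}) and over the sign of $s_{k-1}^Tz_{k-1}$, positivity of the safeguarded term (\ref{replc1}), and boundedness delegated to the clamp (\ref{finallm}). The genuine difference is that you never use Lemma \ref{lm1}: the single observation $\tau_k\ge\beta\alpha_{k-1}>0$, together with the explicit induction hypothesis that the previous stepsize is positive, disposes of every adverse subcase at once. The paper instead splits further according to which argument attains the maximum in (\ref{replc1}), and when the Cauchy--Schwarz term wins it invokes Lemma \ref{lm1} to obtain the quantitative lower bounds $1/(2M)$ and $1/M$; those bounds are stronger than the lemma's statement requires (they keep the raw spectral parameter away from zero independently of the clamp), but they are not needed for positivity. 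Incidentally, your route also sidesteps a sign slip in the paper's Case 2, where the inequality justified by ``since $s_{k-1}^Tz_{k-1}<0$'' points the wrong way (dropping a negative term from the numerator decreases the ratio, it does not increase it); positivity there really follows from your argument, namely that the numerator equals the maximum in (\ref{replc1}) and hence is at least $\beta\alpha_{k-1}>0$. Finally, you patch two degeneracies the paper passes over silently: $s_{k-1}\neq 0$ before termination, so the numerator of (\ref{ssp1}) cannot vanish, and the case $z_{k-1}=0$ in (\ref{ssp2}), which you absorb into the clamp --- this last step is informal, since the ratio is then undefined rather than $+\infty$, but it is a reasonable convention and in any event more than the paper's proof offers.
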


\begin{proof}
From Step $0$ of the SSGM algorithm, if $k=0,~~\lambda _1=\alpha _{0}=1$.

When $k\ge 1$ (Step 4.1 of the SSGM algorithm), we have two choices of stepsize.\\
\texttt{Case 1:} First choice $\alpha _k=\frac{\|s_{k-1}\|^2}{s_{k-1}^Tz_{k-1}}.$

If $s_{k-1}^Tz_{k-1}$ is positive, then it is obvious that $\lambda _k$ is positive. Otherwise, by (\ref{replc1}) 
\begin{equation*}
    \alpha _k=\frac{\|s_{k-1}\|^2}{\max \bigl\{ \beta \alpha _{k-1} ,~s_{k-1}^Tz_{k-1}+ \|s_{k-1}\|\|z_{k-1}\|\bigr\}}
\end{equation*}

If $\beta \alpha _{k-1} >s_{k-1}^Tz_{k-1}+ \|s_{k-1}\|\|z_{k-1}\|$, then $\alpha _k>0$ and consequently, $\lambda _k>0$. 

Otherwise, \[s_{k-1}^Tz_{k-1}+ \|s_{k-1}\|\|z_{k-1}\|\le 2\|s_{k-1}\|\|z_{k-1}\|.\]

Thus, 
\begin{equation*}
\begin{split}
\alpha _k &=\frac{\|s_{k-1}\|^2}{s_{k-1}^Tz_{k-1}+ \|s_{k-1}\|\|z_{k-1}\|}\\
& \ge \frac{\|s_{k-1}\|^2}{2\|s_{k-1}\|\|z_{k-1}\|}\\
& = \frac{\|s_{k-1}\|}{2\|z_{k-1}\|}\\
& \ge \frac{1}{2M} >0.
\end{split}
\end{equation*}
The second inequality follows from Lemma \ref{lm1}. Therefore, $\lambda _k>0$.\\
\texttt{Case 2:} Second choice $\alpha _k=\frac{s_{k-1}^Tz_{k-1}}{\|z_{k-1}\|^2}.$

If $s_{k-1}^Tz_{k-1}$ is positive, then it is obvious that $\lambda _k$ is positive. Otherwise, by (\ref{replc1}) 
\begin{equation*}
    \alpha _k = \frac{\max \bigl\{ \beta \alpha _{k-1} ,~s_{k-1}^Tz_{k-1}+ \|s_{k-1}\|\|z_{k-1}\|\bigr\}}{\|z_{k-1}\|^2}
\end{equation*}

If $\beta \alpha _{k-1} >s_{k-1}^Tz_{k-1}+ \|s_{k-1}\|\|z_{k-1}\|$, then $\alpha _k>0$ and consequently, $\lambda _k>0$. 

Otherwise,
\begin{equation*}
    \begin{split}
        \alpha _k &=\frac{s_{k-1}^Tz_{k-1}+ \|s_{k-1}\|\|z_{k-1}\|}{\|z_{k-1}\|^2}\\
        & \ge \frac{\|s_{k-1}\|\|z_{k-1}\|}{\|z_{k-1}\|^2} ~~\text{ since } s_{k-1}^Tz_{k-1}<0\\
        &=  \frac{\|s_{k-1}\|}{\|z_{k-1}\|}\\
        & \ge \frac{1}{M} >0
    \end{split}
\end{equation*}
Again the second inequality follows from Lemma \ref{lm1}.

It follows that any of the two choice of the stepsize $\lambda _k$ is positive. As already stated in the remarks, Step 4.2 of the SSGM algorithm ensures that $\lambda _k$ is bounded. 
\end{proof}
In Step 5 of the SSGM algorithm, $C_k$ is computed as the convex combination of the function values $f_0,f_1,...,f_k.$ The following Lemma established that for any choice $\eta _k\in [0,1]$, $C_k$ lies between $f_k\text{ and }A_k,$ where
\begin{equation}\label{eqA}
A_k=\frac{1}{k+1}\sum _{i=0}^kf_i.
\end{equation}
\begin{lemma}\label{lm3}
The iterates generated by the SSGM algorithm satisfy $f_k\le C_k\le A_k, \text{ }\forall k\ge 0,$ where $A_k$ is given by (\ref{eqA}).
\end{lemma}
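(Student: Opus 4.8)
The plan is to establish the two bounds $f_k\le C_k$ and $C_k\le A_k$ separately, each by induction on $k$, after first recording the auxiliary estimate $1\le Q_k\le k+1$. The latter follows immediately from $Q_0=1$, the recursion $Q_{k+1}=\eta_kQ_k+1$ in (\ref{eq15}) and $\eta_k\in[0,1]$: nonnegativity of $\eta_kQ_k$ gives $Q_{k+1}\ge1$, while $\eta_k\le1$ together with the inductive hypothesis $Q_k\le k+1$ gives $Q_{k+1}\le Q_k+1\le k+2$.

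For the lower bound I would start from the base case $C_0=f_0$ and, in the inductive step, first extract from the line search (\ref{eqbk}) that $f_{k+1}\le C_k$; indeed the search direction satisfies $g_k^Td_k\le0$, so the Armijo term $\gamma t_k g_k^Td_k$ is nonpositive. Using $Q_{k+1}=\eta_kQ_k+1$ to rewrite the update for $C_{k+1}$ in (\ref{eq15}) yields the identity
\begin{equation*}
C_{k+1}-f_{k+1}=\frac{\eta_kQ_k\,(C_k-f_{k+1})}{\eta_kQ_k+1},
\end{equation*}
whose right-hand side is a quotient of a nonnegative numerator (the product of the nonnegative factors $\eta_kQ_k$ and $C_k-f_{k+1}$) by a positive denominator. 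Hence $f_{k+1}\le C_{k+1}$, closing the induction.

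For the upper bound the key observation is that (\ref{eq15}) expresses $C_{k+1}$ as the convex combination $C_{k+1}=\theta_kC_k+(1-\theta_k)f_{k+1}$ with $\theta_k=\eta_kQ_k/(\eta_kQ_k+1)\in[0,1)$. Since $w\mapsto w/(w+1)$ is increasing and $\eta_kQ_k\le k+1$ by the auxiliary estimate, we get $\theta_k\le(k+1)/(k+2)$. Writing the target average from (\ref{eqA}) in the matching form $A_{k+1}=\tfrac{k+1}{k+2}A_k+\tfrac{1}{k+2}f_{k+1}$ and invoking the inductive hypothesis $C_k\le A_k$, the comparison $C_{k+1}\le A_{k+1}$ reduces, after cancellation, to the single sign condition
\begin{equation*}
\Bigl(\theta_k-\tfrac{k+1}{k+2}\Bigr)\bigl(A_k-f_{k+1}\bigr)\le0.
\end{equation*}

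The main obstacle is to see why this product is nonpositive, and in particular why $A_k-f_{k+1}\ge0$ even though the method is genuinely nonmonotone and $A_k$ is only a running average. This is where the two ingredients combine: the line search already gave $f_{k+1}\le C_k$, and the inductive hypothesis gives $C_k\le A_k$, so $f_{k+1}\le A_k$; together with $\theta_k-\tfrac{k+1}{k+2}\le0$ the displayed product is a product of a nonpositive and a nonnegative factor, and the upper bound follows. The base case $C_0=f_0=A_0$ is immediate, completing both inductions.
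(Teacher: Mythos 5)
Your proof is correct and takes essentially the same route as the paper's: both arguments rest on the bound $Q_k\le k+1$, the line-search consequence $f_{k+1}\le C_k$ (via $g_k^Td_k\le 0$), and the monotonicity of the convex combination defining $C_{k+1}$, with induction giving the upper bound $C_k\le A_k$. The only cosmetic differences are that you parametrize the combination by $\theta_k=\eta_kQ_k/(\eta_kQ_k+1)$ and swap the order of the two monotonicity steps, whereas the paper works directly with the function $h(r)=(rC_{k-1}+f_k)/(r+1)$ and applies the induction hypothesis last.
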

\begin{proof}
By (\ref{eqbk}) and Remark 2(i), we have
\begin{equation}\label{eq20}
f(x_{k-1}-\lambda _k g_{k-1})=f_k\le C_{k-1}.
\end{equation}
%By (\ref{eq15}) and (\ref{eq20}),
Define a real valued function $h:\mathbb{R}\to \mathbb{R}$ by\[ h(r)=\frac{rC_{k-1}+f_k}{r+1},\] then \[\frac{dh}{dr}=\frac{C_{k-1}-f_k}{(r+1)^2}.\]
Now, since $f_k\le C_{k-1}$, this makes $h$ to be nondecreasing function, and $h(0)=f_k\le h(r), ~~ \forall r\ge 0.$ In particular, taking $r=\eta _{k-1}Q _{k-1}$,  gives \[ f_k=h(0)\le h( \eta _{k-1}Q_{k-1})=C_k.\] This implies the lower bound $f_k\le C_k, ~\forall k$.

The upper bound for $C_k$ is obtained by induction. Since $C_0=f_0,$ it holds that $C_0=A_0$ for $k=0$. Assume $C_k\le A_k$, for all $j=1,2,...,k-1$. Using the fact that $\eta _k\in [0,1]$ and $Q_0=1$ by (\ref{eq15}), we have
\begin{equation}\label{ieq30}
Q_{j}=1+\sum _{i=0}^{j}\prod _{n=0}^{i}\eta _{j-n}\le j+1.
\end{equation}
From the definition of $h$, and using inequality (\ref{ieq30}) , we have
\begin{equation}
C_k=h(\eta _{k-1}Q_{k-1})=h(Q_k-1)=h(k).
\end{equation}
By the induction step, \[ h(k)=\frac{kC_{k-1}+f_k}{k+1}\le \frac{kA_{k-1}+f_k}{k+1}=A_k.\]
Therefore, $C_k\le A_k, ~~\forall k$.
%we get \[ C_k=\frac{\eta _{k-1}Q_{k-1}C_{k-1}+f_{k}}{Q_{k}}\ge f_k.\]
%Thus, the lower bound for $C_k$ is established. The remainder of the proof (i.e. $C_k\le A_k, \text{ }\forall k\ge 0$) follows from the second part of Lemma 1.1 of \cite{zhang04}.
\end{proof}

\begin{theorem}
Let $f(x)$ be given by Equation (\ref{eq2}) and suppose Assumption 1 and 2 hold. Then the   sequence of iterates $\{x_k\}$ generated by the SSGM algorithm is contained in the level set $\mathcal{L}$ and
\begin{equation}\label{eq21}
\liminf _{k\to \infty}\|g_k\|=0.
\end{equation}
Moreover, if $\eta _{max}<1,$ then
\begin{equation}\label{eq22}
\lim_{k\to \infty}\|g_k\|=0.
\end{equation}
\end{theorem}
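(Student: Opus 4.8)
The plan is to establish global convergence in two stages, following the standard template for nonmonotone line search methods à la Zhang and Hager, using the structural lemmas already proved. First I would show that the whole sequence $\{x_k\}$ stays in the level set $\mathcal{L}$. By Lemma \ref{lm3} we have $f_k \le C_k \le A_k$, and from the descent property of the line search condition (\ref{eqbk}) together with Remark 2(i), namely $g_k^Td_k \le -c_1\|g_k\|^2 < 0$, the sequence $\{C_k\}$ is nonincreasing. Hence $f_k \le C_k \le C_0 = f_0$, which places every iterate in $\mathcal{L}$; Assumption 1 then guarantees boundedness of $\{x_k\}$.

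Next I would prove (\ref{eq21}). The key is to extract a sufficient-decrease estimate from the line search. When the unit step is accepted, (\ref{eqbk}) and Remark 2(i) give $f_{k+1} \le C_k - \gamma c_1\|g_k\|^2$; when backtracking via quadratic interpolation is needed, the accepted step $t_k$ is bounded below by a positive constant — here I would use Assumption 2 (Lipschitz continuity of $g$, i.e. (\ref{assumpc})) together with the direction bound $\|d_k\| \le c_2\|g_k\|$ from Remark 2(ii) to obtain $t_k \ge \bar{t}\,$ for some $\bar{t}>0$ independent of $k$. Combining these yields a uniform relation of the form $f_{k+1} \le C_k - \sigma\|g_k\|^2$ for some $\sigma>0$. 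Feeding this into the recursion (\ref{eq15}) for $C_{k+1}$ and summing, the boundedness of $f$ on $\mathcal{L}$ forces $\sum_k \|g_k\|^2 / Q_{k+1} < \infty$ or the analogous telescoping bound on $C_k - C_{k+1}$, from which $\liminf_{k\to\infty}\|g_k\| = 0$ follows.

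For the stronger conclusion (\ref{eq22}) under $\eta_{\max}<1$, I would exploit the fact that $Q_{k+1} = \eta_k Q_k + 1$ is then uniformly bounded: since $\eta_k \le \eta_{\max} < 1$, we get $Q_k \le 1/(1-\eta_{\max})$ for all $k$. This uniform bound upgrades the summability estimate, because the decrease $C_k - C_{k+1}$ now dominates $\sigma\|g_k\|^2/Q_{k+1} \ge \sigma(1-\eta_{\max})\|g_k\|^2$. Summing the telescoping series $\sum_k (C_k - C_{k+1})$, which converges as $\{C_k\}$ is monotone and bounded below, forces $\sum_k \|g_k\|^2 < \infty$ and hence $\lim_{k\to\infty}\|g_k\| = 0$.

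The main obstacle I anticipate is the lower bound on the backtracking stepsize $t_k$. Showing $t_k \ge \bar{t}>0$ requires a careful mean-value / Lipschitz argument: if a trial step $t$ fails (\ref{eqbk}), then $f(x_k + t d_k) > C_k + \gamma t g_k^T d_k \ge f_k + \gamma t g_k^T d_k$, and expanding the left side with the Lipschitz gradient bound (\ref{assumpc}) and the descent and norm bounds from Remark 2 should yield $t \ge 2(1-\gamma)c_1 / (l_3 c_2^2)$ or a similar explicit constant. The quadratic-interpolation mechanics (as opposed to plain halving) make this estimate slightly more delicate, since one must verify the interpolated step does not shrink below this threshold, but the underlying inequality is the same. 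Once this uniform lower bound is in hand, both limit statements follow from the standard summation argument.
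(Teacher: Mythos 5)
Your proposal is correct in substance, but it is worth noting what the paper actually does: its entire proof is the single sentence ``follows directly from \cite{zhang04}''. What you have written is, in effect, a reconstruction of the cited Zhang--Hager convergence theorem specialized to $d_k=-\lambda_k g_k$: the monotonicity of $\{C_k\}$ (from (\ref{eqbk}) and $g_k^Td_k<0$) giving $f_k\le C_k\le C_0$ and hence containment in $\mathcal{L}$; the telescoping estimate $C_k-C_{k+1}\ge \gamma t_k c_1\|g_k\|^2/Q_{k+1}$ combined with $Q_{k+1}\le k+2$ to get (\ref{eq21}); and the uniform bound $Q_{k+1}\le 1/(1-\eta_{\max})$ when $\eta_{\max}<1$ to upgrade this to (\ref{eq22}). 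This matches the cited argument step for step, so mathematically you are on the same route; the difference is purely one of self-containment. Your version buys something real: it forces explicit verification of the hypotheses that the paper leaves implicit when invoking the citation, namely the direction conditions of Remark 2 (which hold with $c_1=\lambda_{\min}$, $c_2=\lambda_{\max}$ by Lemma \ref{lm2} and Step 4.2), the Lipschitz bound (\ref{assumpc}), and the fact that $f\ge 0$ for least squares, which makes $\{C_k\}$ bounded below without further argument. Two small caveats, both of which you correctly anticipated or should make explicit: first, the uniform lower bound $t_k\ge \bar t$ under quadratic interpolation requires a safeguard (the accepted step must be confined to a fixed fraction of the failed trial step), exactly as in Zhang--Hager; without such a safeguard the interpolated step could collapse and the bound would fail. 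Second, the Lipschitz estimate must hold on a neighborhood $\mathcal{N}$ of $\mathcal{L}$ rather than on $\mathcal{L}$ itself, since rejected trial points $x_k+td_k$ need not lie in $\mathcal{L}$; Assumption 2(a) is phrased on $\mathcal{N}$ for precisely this reason, and your argument should invoke it there.
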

\begin{proof}
The proof of this theorem follows directly from \cite{zhang04}.
\end{proof}
%%%------------------------------------------------
%
\section{Numerical Experiments}
%
%%------------------------------------------------
%

In this section we present some numerical experiments to assess the efficiency of the proposed structured two-point spectral gradient choices of stepsize. We examine $30$ large-scale test problems (\textit{see Table \ref{tab2} for the list}) using $10$ different dimensions $1000, 2000, ...,10000$ together with $10$ small-scale problems giving us a total of $310$ instances. The specified dimension used for the small-scale problems can be found in Table \ref{tab1}.  

All methods were coded in MATLAB R2017a and run on a Dell PC with an Intel Corei3 processor, 2.30GHz CPU speed, 4GB of RAM and Windows 7 operating system. We chose the stopping criterion as follows:
\begin{itemize}
    \item $\|g_k\|_{\infty}\le 10^{-4}$,
    \item The number of iterations exceeds $1000$, and
    \item The number of function evaluations go beyond $2000$.
\end{itemize}

In order to test the effectiveness of the proposed safeguarding technique in the case of negative curvature direction, we compare the three different versions of SSGM1 and SSGM2 choices of stepsize. The first version (\textit{named SSGM1A and SSGM2A}) is the one that involves the classical safeguard in the case $s_{k-1}z_{k-1}\le 0$, the second version (\textit{named SSGM1B and SSGM2B}) uses the retard technique for the safeguard, and the third version (\textit{named SSGM1C and SSGM2C}) use the new proposed approach (\ref{replc1}) for safeguarding negative curvature direction. After removing those problems for which at least one method fails to satisfy the stopping criterion, $297$ are left and we consider performance for this number of problems. Figure \ref{fig1}-\ref{fig6} shows the Dolan and Mor$\acute{e}$ \cite{twentyfive} performance profiles based on the number of iterations,  the number of functions evaluations, and CPU time. 

In terms of number of iterations (Figure 1), SSGM1A performs slightly better than its counterparts SSGM1B and SSGM1C, while in terms of functions evaluation and CPU time (Figure \ref{fig2} and \ref{fig3}) SSGM1C performs moderately better than SSGM1A and SSGM1B. Figure 2 clearly shows how the classical choice (SSGM1A) suffers from more function evaluations. This drawback was also observed in \cite{liu17}. Figure 3 indicates that all the $3$ versions of SSGM1 have almost equal speed.

The results of the SSGM2 are slightly different from the results of SSGM1. In this case, the SSGM2C method performed better than SSGM2A and SSGM2B in terms of the number of iterations and function evaluations (Figure 4 and 5). Figure \ref{fig6} shows that SSGM2B is faster than SSGM2A and SSGM2C. This good performance was also observed in \cite{liu17}. Based on this, we can claim that in practice, the retarding technique is better than the other techniques for safeguarding the choice of stepsize in terms of CPU time.

In the second set of experiments, we consider the implementation of our two choices of stepsize using the new technique of safeguarding in the case of negative curvature direction (\ref{replc1}), compared with the top performer of the six choices of stepsize proposed in \cite{biglari13}, SBB4 and another top performer from the two choices of stepsize presented in \cite{wangliqi10}, SGW2. Both SBB4 and SGW2 were implemented using Zhang and Hager nonmonotone line search with $\eta _k=0.7$.

For the sake of analyzing  and comparing, we implement each of the methods using the backtracking strategy stated in the paper exactly in the manner it was presented. We chose the following parameters for the implementation SSGM1 and SSGM2 methods:   
 $\gamma =10^{-4}, \lambda _{min}=10^{-30},\lambda _{max}=10^{30}$ and $\eta _{min}=0.1, \eta _{max}=0.85 \text{ and }$ $\beta =10^3$.  We have alliviated on the recommendation by Santos and Silva \cite{santos14} for choosing the parameter $\eta _k \in (0,1)$ in Step 5 of the SSGM methods as follows:
\begin{equation}\label{eta1}
 \eta _k=0.75e^{-(\frac{k}{45})^2}+0.1.
\end{equation}

\begin{figure*}
% Use the relevant command to insert your figure file.
% For example, with the graphicx package use
 \includegraphics[width=10cm ]{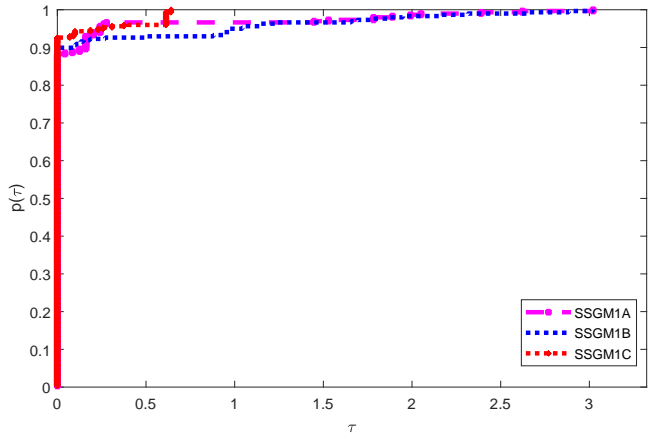}
% figure caption is below the figure
\caption{Performance profile of SSGM1A, SSGM1B and SSGM1C
methods with respect to number of iterations}
\label{fig1}       % Give a unique label
\end{figure*}

\begin{figure*}
% Use the relevant command to insert your figure file.
% For example, with the graphicx package use
\includegraphics[width=10cm]{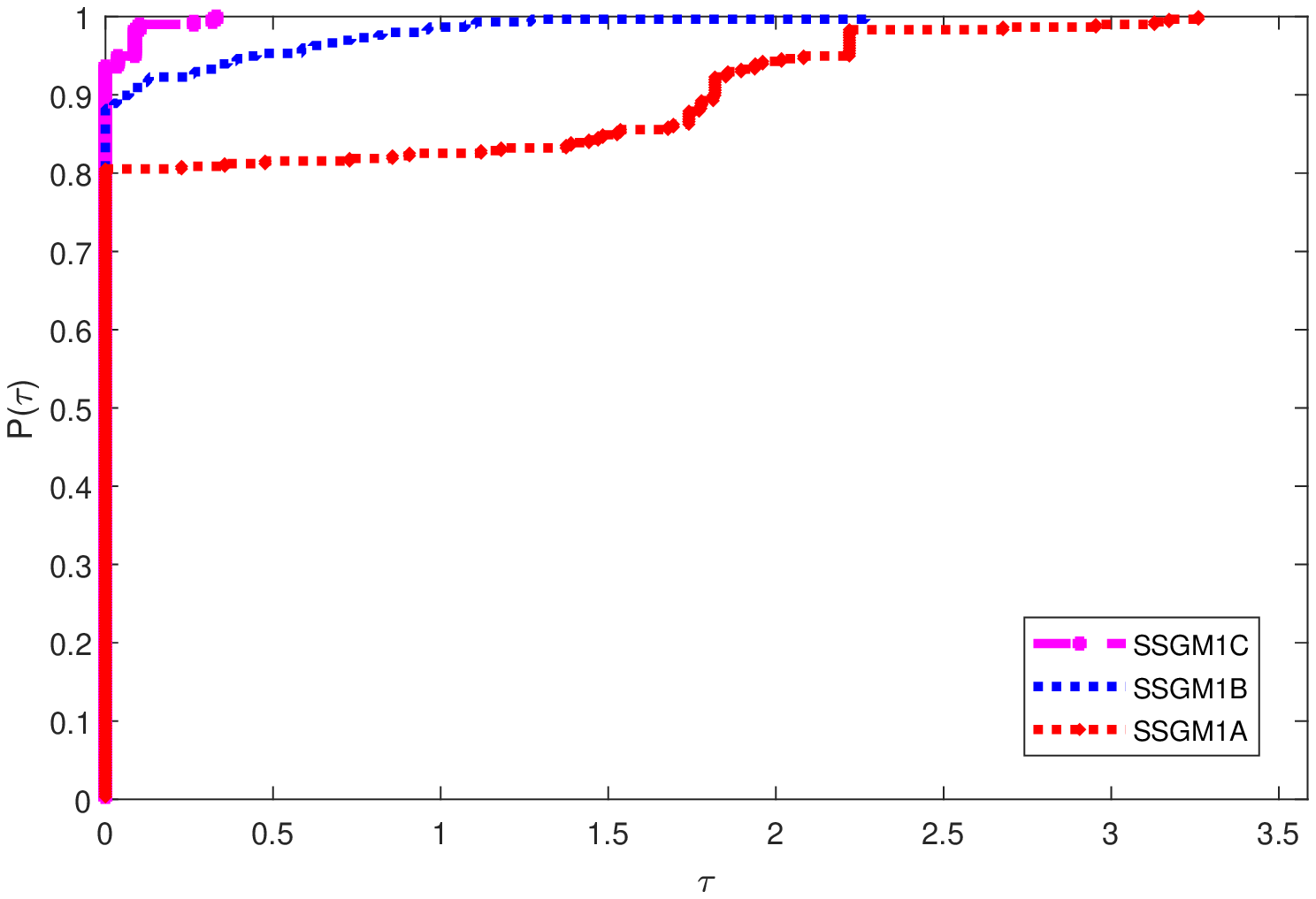}
% figure caption is below the figure
\caption{Performance profile of SSGM1A, SSGM1B and SSGM1C
methods with respect to number of function evaluations}
\label{fig2}       % Give a unique label
\end{figure*}

\begin{figure*}
% Use the relevant command to insert your figure file.
% For example, with the graphicx package use
\includegraphics[width=10cm]{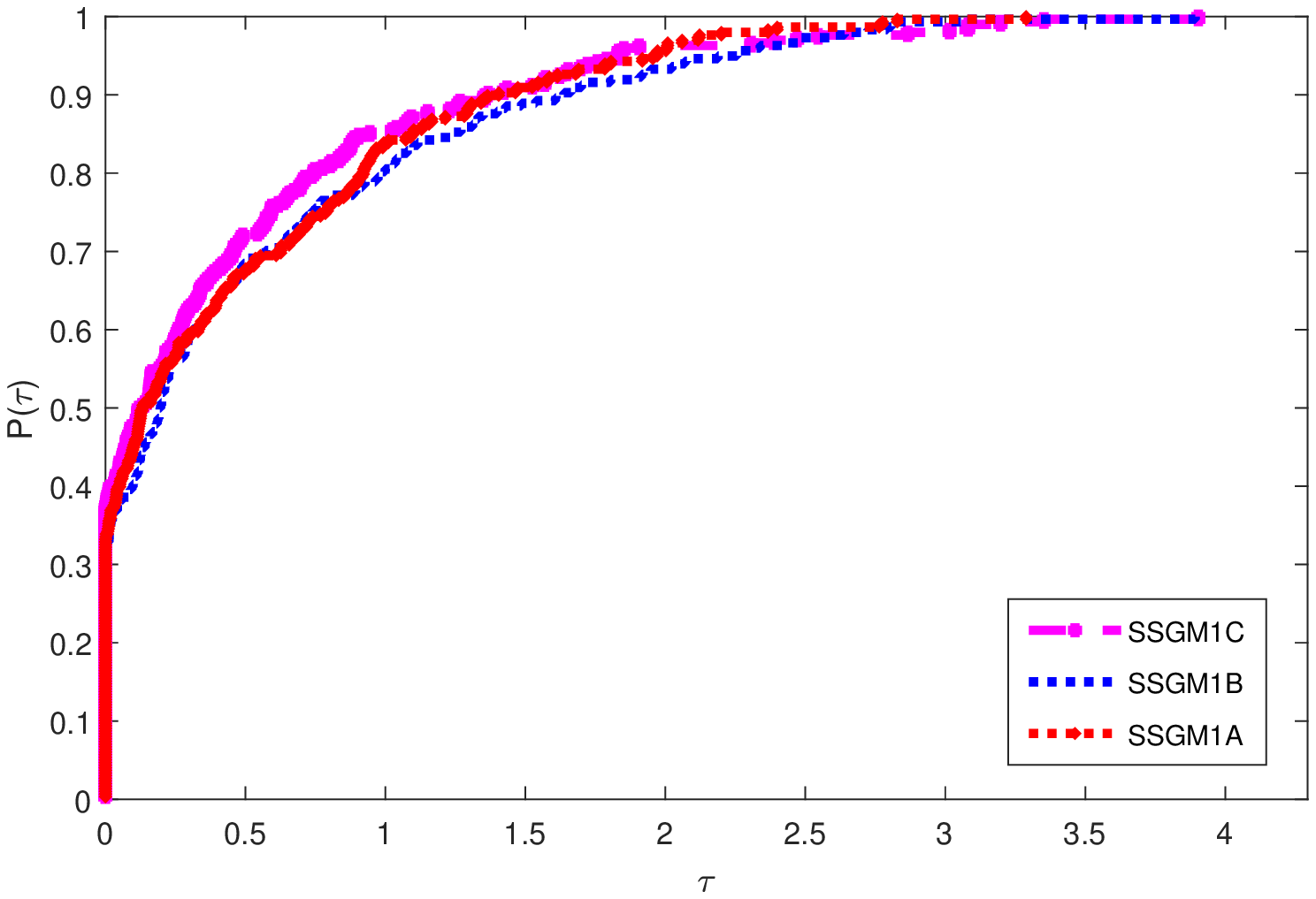}
% figure caption is below the figure
\caption{Performance profile of SSGM1A, SSGM1B and SSGM1C
methods with respect to CPU time}
\label{fig3}       % Give a unique label
\end{figure*}

\begin{figure*}
% Use the relevant command to insert your figure file.
% For example, with the graphicx package use
\includegraphics[width=10cm]{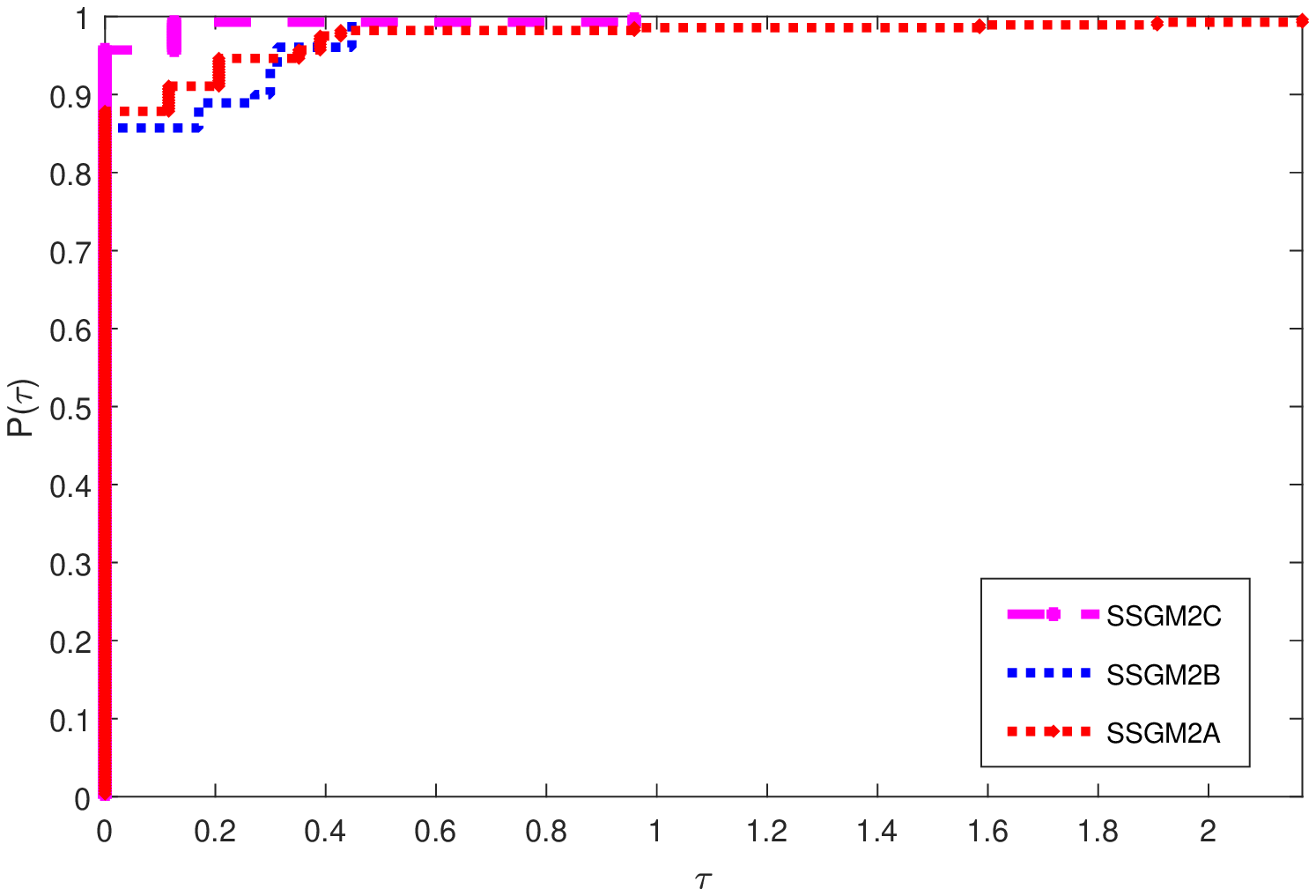}
% figure caption is below the figure
\caption{Performance profile of SSGM2A, SSGM2B and SSGM2C
methods with respect to number of iterations}
\label{fig4}       % Give a unique label
\end{figure*}

\begin{figure*}
% Use the relevant command to insert your figure file.
% For example, with the graphicx package use
\includegraphics[width=10cm]{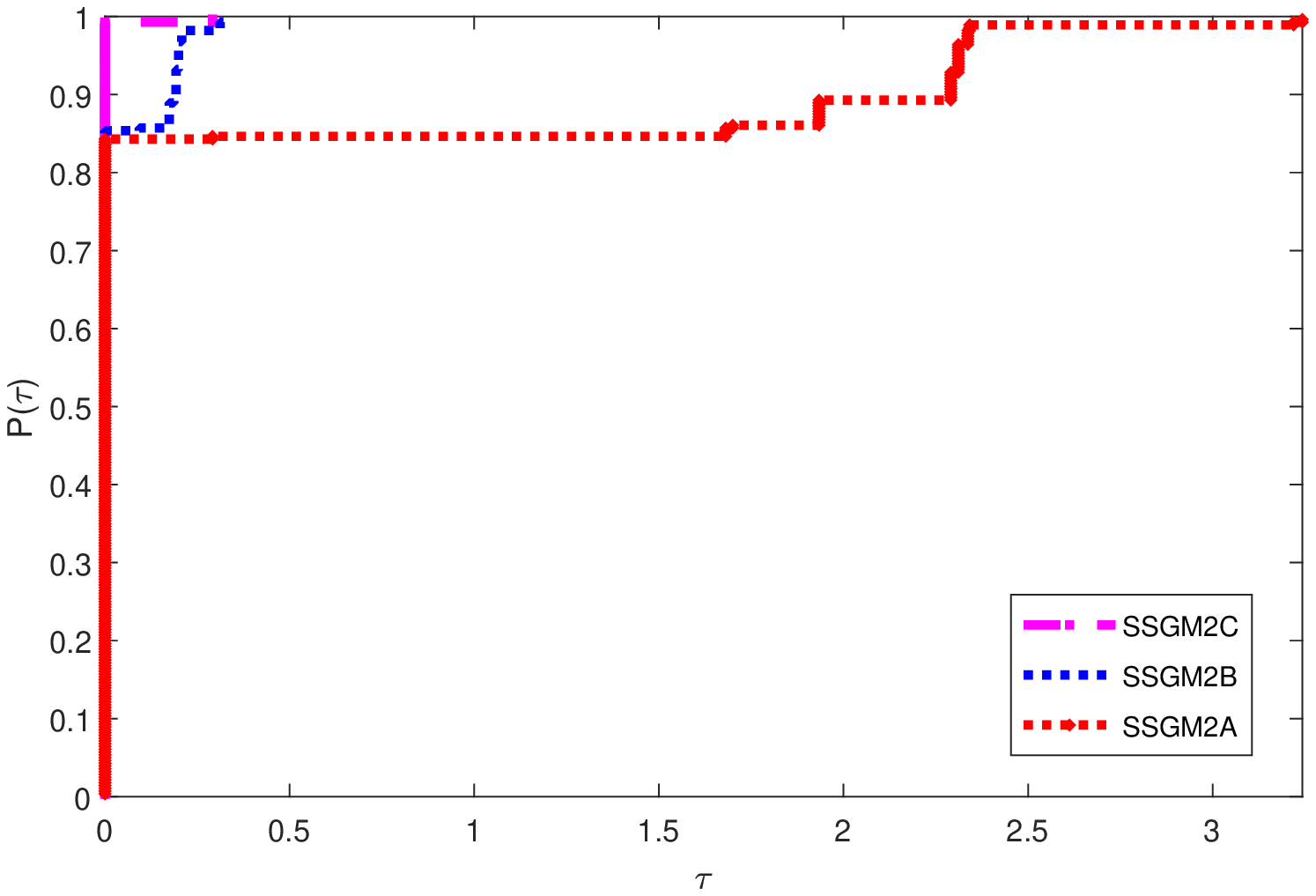}
% figure caption is below the figure
\caption{Performance profile of SSGM2A, SSGM2B and SSGM2C
methods with respect to number of function evaluation}
\label{fig5}       % Give a unique label
\end{figure*}

\begin{figure*}
% Use the relevant command to insert your figure file.
% For example, with the graphicx package use
\includegraphics[width=10cm]{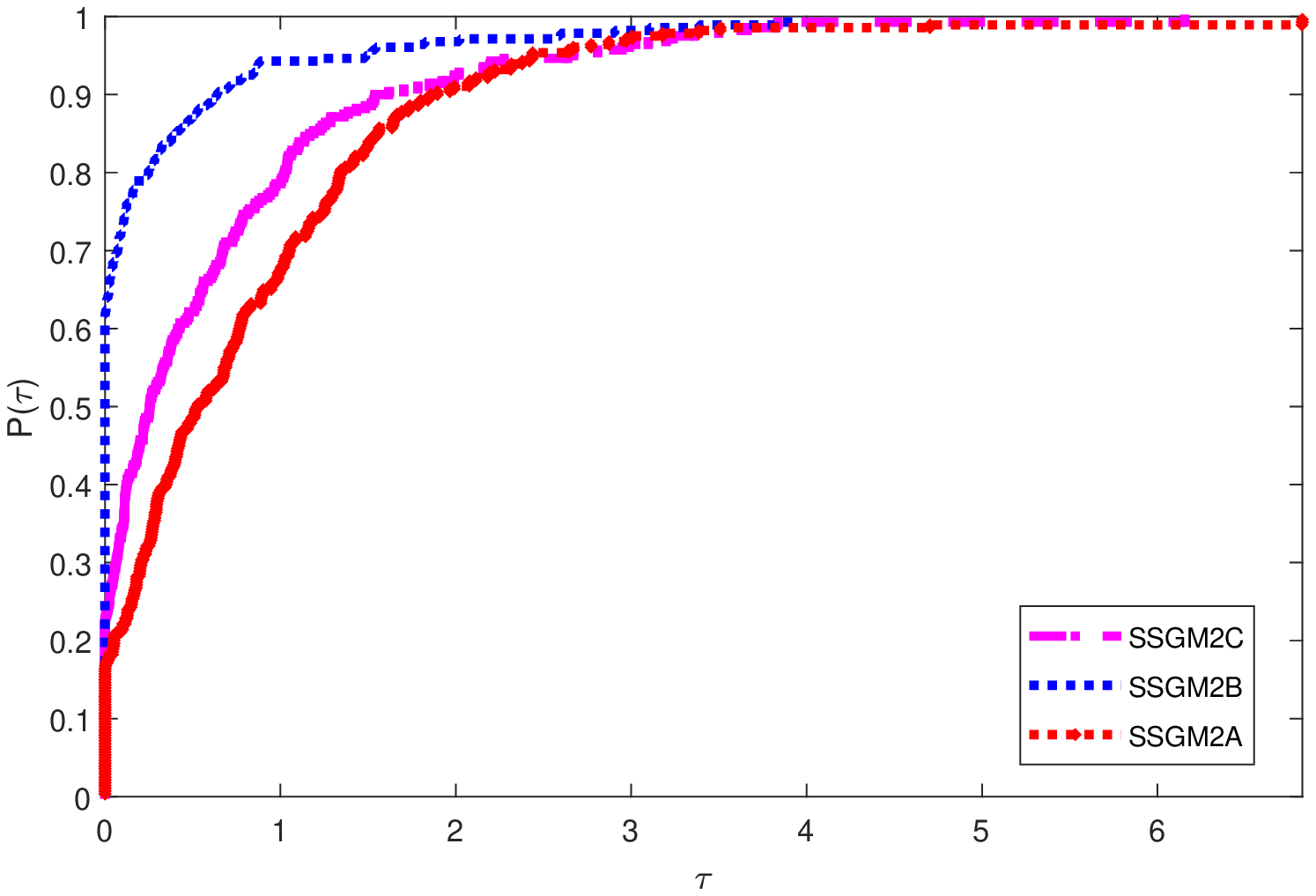}
% figure caption is below the figure
\caption{Performance profile of SSGM2A, SSGM2B and SSGM2C
methods with respect to CPU time}
\label{fig6}       % Give a unique label
\end{figure*}

\begin{figure*}
\includegraphics[width=10cm]{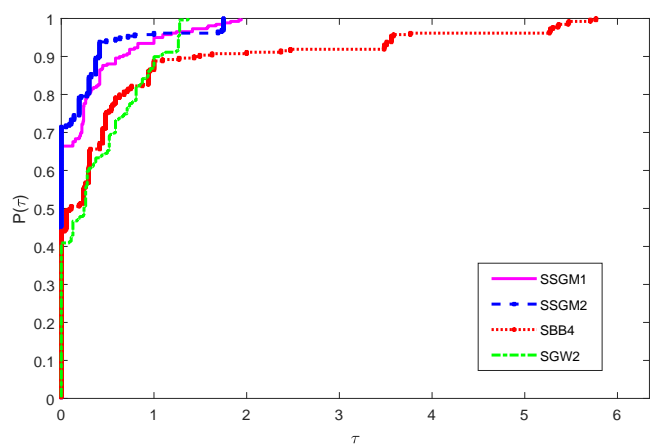}
\caption{Performance profile of SSGM1, SSGM2, SBB4 and SGW2
methods with respect to number of iterations}
\label{fig7} 
\end{figure*}

\begin{figure*}
\includegraphics[width=10cm]{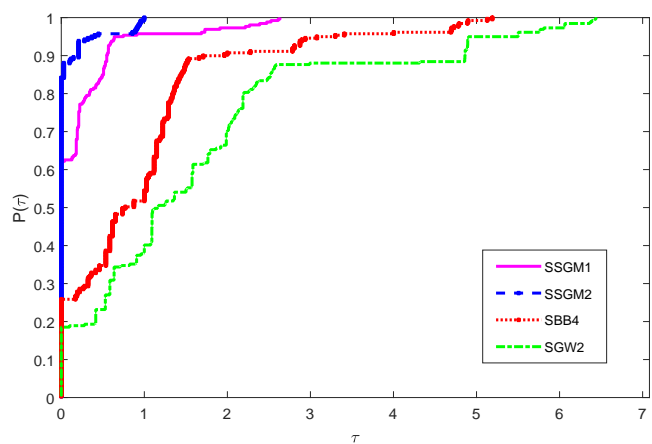}
\caption{Performance profile of SSGM1, SSGM2, SBB4 and SGW2
methods with respect to number of function evaluation}
\label{fig8} 
\end{figure*}

\begin{figure*}
\includegraphics[width=10cm]{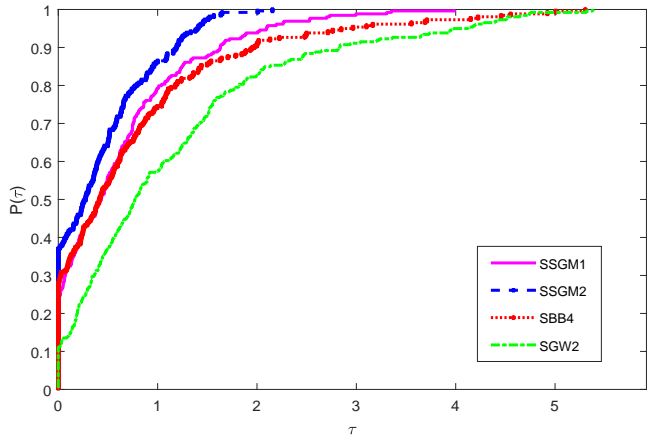}
\caption{Performance profile of SSGM1, SSGM2, SBB4 and SGW2
methods with respect to CPU time}
\label{fig9} 
\end{figure*}

\begin{table}[htbp]
  \centering
  \caption{ List of small-scale test problems with reference, dimension, starting point and residual size}
  \resizebox{\columnwidth}{!}{%
    \begin{tabular}{clccc}
    \toprule
    \textbf{S/N} & \textbf{PROBLEM NAME} & \textbf{DIMENSION (n,m)} & \textbf{INITIAL POINT} & \textbf{RESIDUAL SIZE} \\
    \midrule
    \textbf{1} & Bard Function \cite{more1981} & (3,15)  & $(-1e^k,-1e^k,-1e^k)^T, k\ge 3$ & Large \\
    \textbf{2} & Brown and Dennis \cite{more1981} & (4,20)  & $(25,5,-5,-1)^T$ & Large \\
    \textbf{3} & Beale Function \cite{more1981} & (2,3)   & $(1,1)^T$ & Zero \\
    \textbf{4} & Branin Function \cite{betts76} & (2,2)   & $(0,5)^T$ & Small \\
    \textbf{5} & Brown Badly Scaled \cite{more1981} & (2,3)   & $(1,1)^T$ & Zero \\
    \textbf{6} & Freudenstein \& Roth Function  \cite{more1981} & (2,2)   & $(0.5,-2)^T$ & Large \\
    \textbf{7} & Jennrich and Sampson \cite{more1981} & (2,10)  & $(0.2,0.3)^T$ & Large \\
    \textbf{8} & Linear Rank 1 with zero c \cite{more1981} & (10,10) & $(1,1,...,1)^T$ & Small \\
    \textbf{9} & Linear Rank 2 \cite{bb8} & (10,10) & $(1, 1/n,1/n,...,1/n)^T$ & Zero \\
    \textbf{10} & Rank Deficient Jacobian \cite{douglass16}  & (2,3)   & $(-1.2,1)^T$ & Small \\
    \bottomrule
    \end{tabular}
    }
  \label{tab1}%
\end{table}%

\begin{table}[htbp]
  \centering
  \caption{List of large-scale test problems with reference, starting point and residual size}
  \resizebox{\columnwidth}{!}{%
    \begin{tabular}{clcc}
    \toprule
    \textbf{S/N} & \textbf{PROBLEM NAME} & \textbf{INITIAL POINT} & \textbf{RESIDUAL SIZE} \\
    \midrule
    \textbf{11} & Ascher and Russel Boundary Value Problem \cite{lukvsan03} & $(1/n,1/n,...,1/n)^T $ & Zero \\
    \textbf{12} & Brown Almost Linear \cite{more1981}  & $(0.5,0.5,...,0.5)^T$ & Zero \\
    \textbf{13} & Broyden Tridiagonal Function \cite{lukvsan03} & $(-1,-1,...,-1)^T$ & Zero \\
    \textbf{14} & Discerete Boundary Value \cite{more1981} & $(1/(n+1)(1/(n+1)-1,...,1/(n+1)(1/(n+1)-1)^T$ & Zero \\
    \textbf{15} & Exponential Function 1 \cite{bb8} & $(n/n-1,n/n-1,...,n/n-1)^T$ & Zero \\
    \textbf{16} & Exponential Function 2 \cite{bb8} & $(1/n^2,1/n^2,...,1/n^2)^T$ & Zero \\
    \textbf{17} & Extended Cube Function \cite{jamil13} $(n=2k)$ & $(-2,1,-2,1,...-2,1)^T$ & Zero \\
    \textbf{18} & Extended Freudenstein \& Roth \cite{bb8} $(n=2k)$ & $(6,3,6,3,...,6,3)^T$ & Zero \\
    \textbf{19} & Extended Hemmelblau \cite{jamil13} $(n=2k)$ & $(1,1/n,1,1/n,...,1,1/n)^T$ & Zero \\
    \textbf{20} & Extended Powell Singular \cite{bb8} & $1.5\times 10^{-4}(1,1,...,1)^T$ & Zero \\
    \textbf{21} & Extended Rosenbrock\cite{more1981} $(n=2k)$ & $(-1.2,1,-1.2,1,...,-1.2,1)^T$ & Zero \\
    \textbf{22} & Extended Wood Problem \cite{ziliri81}$(n=2k)$ & $(0,0,...,0)^T$ & zero \\
    \textbf{23} & Function 21  \cite{bb8} $(n=3k)$ & $(1,1,...,1)^T$ & Zero \\
    \textbf{24} & Function 27 \cite{bb8} & $(100,1/n^2,1/n^2,...,1/n^2)^T$ & Zero \\
    \textbf{25} & Generalized Broyden Tridiag \cite{lukvsan03} & $(-1,-1,...,-1)^T$ & Zero \\
    \textbf{26} & Linear Function Full Rank \cite{more1981} & $(1,1,...,1)^T$ & Small \\
    \textbf{27} & Linear Rank 1 Function \cite{more1981} & $(1,1,...,1)^T$ & Large \\
    \textbf{28} & Logarithmic Function  \cite{bb8} & $(1,1,...,1)^T$ & Zero \\
    \textbf{29} & Penalty Function I \cite{bb8} & $(2,2,...,2)^T$ & Zero \\
    \textbf{30} & Problem 202 \cite{lukvsan03} & $(2,2,...,2)^T$ & Zero \\
    \textbf{31} & Problem 206 \cite{lukvsan03} & $(1/n,1/n,...,1/n)^T$ & Zero \\
    \textbf{32} & Problem 212 \cite{lukvsan03} & $(0.5,0.5,...,0.5)^T$ & Zero \\
    \textbf{33} & Singular Broyden \cite{lukvsan03} & $(-1,-1,...,-1)^T$ & Zero \\
    \textbf{34} & Singular Function \cite{bb8} & $(1,1,...,1)^T$ & Zero \\
    \textbf{35} & Strictly Convex Function I  \cite{bb8}& $(1/n,2/n,...,1)^T$ & Large \\
    \textbf{36} & Strictly Convex Function II \cite{bb8} & $(1/10n,2/10n,...,1/10n)^T$ & Large \\
    \textbf{37} & Trigonometric Exponential System \cite{lukvsan03} & $(0.5,0.5,...,0.5)^T$ & Zero \\
    \textbf{38} & Trigonometric Function \cite{more1981} & $(1/10n,1/10n,...,1/10n)^T$ & Zero \\
    \textbf{39} & Trigonometric Logarithmic Function & $(1,1,...,1)^T$ & Zero \\
    \textbf{40} & Variably Dimensioned \cite{more1981}  & $(1-1/n,1-2/n,...,0)^T$ & Zero \\
    \bottomrule
    \end{tabular}%
    }
  \label{tab2}%
\end{table}%

After eliminating all the problems in which at least one method failed, $258$ problems remained. Therefore we use this number of success to plot the performance profile graphs. Figures \ref{fig7} to \ref{fig9} consist of the performance profiles of SSGM1, SSGM2, SBB4 and SGW2 in terms of the number of iterations, the number of function evaluations and CPU time metric. 
 
 It can be observed from Figure \ref{fig7} , SSGM2 solves and wins $70\%$, SSGM2 solves and wins about $67\%$, while SBB4 and SGW2 solves and wins less than $50\%$ of the $258$ problems. Although, SGW2 performance is the same as SSGM1 and SSGM2 within the fraction of $\tau >1$, in most of the values of $\tau$ SBB4 performance is far less of the rest of the method except for the fraction of $\tau >5$.

In terms of the number of function evaluation (Figure \ref{fig8}), the top performer is SSGM2, as it solves about $90\%$ of the problems with the least number of functions evaluation. This good performance of SSGM2 is related to the fact that the stepsize choice incorporates more information about the Hessian of the objective function. Furthermore, this good performance indicates that the computed direction using SSGM2 stepsize is mostly acceptable. SSGM1 is the second performer in this metric with over $60\%$ success, followed by SBB4 and SGW2 both having less than $30\%$ success. 

Figure \ref{fig9} reveals that SSGM2 is very efficient because it continues to be at the top the graph. SGW2 is the slowest method with the least success of less than  $20\%$.

The details of problem $39$ of Table \ref{tab2} is given below. It is a modification of the Logarithmic function \cite{bb8}., the $28th$ problem of Table \ref{tab2}. 
 \begin{itemize}
\item [28.] Trigonometric Logarithmic Function:
\begin{equation*}
F_i(x)=\text{ln}(x_i+1)-\frac{\sin(x_i)}{n}, ~\text{ for  } i=1,2,...,n. 
\end{equation*}
\end{itemize}

\section{Conclusions and Future Research}
We have proposed a structured spectral gradient method for solving nonlinear least squares problems (SSGM). The proposed method requires neither the exact Jacobian computation nor its storage space; instead, it requires only a loop-free subroutine of the action of its transpose upon a vector. This is an advantage, especially for large-scale problems. In addition, our proposed method is suitable for both large and small residual problems. Moreover, we proposed a simple strategy with theoretical support (see Lemma \ref{lm2}) and numerical backing for safeguarding the negative curvature direction.  Despite the fact that the simple technique was shown to be effective based on the numerical experiments presented, we feel that there is room for developing better choices for the stepsize in the case of negative curvature direction. 

To the best of our knowledge, this is the first time an attempt is made to approximate the Hessian of the objective function of the nonlinear least squares problems using a spectral parameter that incorporates the special structure of the objective function. Although the approach is not new for general unconstrained optimization problems, we have addressed a special case of the unconstrained optimization problems, that is minimizing sums of squares of nonlinear functions. Considering the fact that several modifications of the two-point stepsize methods for general unconstrained optimization have been studied in the literature, we believe that this study uncovered an interesting technique for exploiting the special structure of the nonlinear least squares problems.

In the SSGM algorithm, the direction is always negative of the gradient. Consequently, a suitable line search strategy can be used for the globalization of the algorithm. Based on the preliminary numerical results presented in Section 4, we use it as evidence to claim that SSGM with the second choice of stepsize (SSGM2) is the most efficient compared to the other choices of the step size when applied to solve the nonlinear least squares problems.

In future research, we intend to study better techniques to use for getting the best approximation of the Hessian matrix that takes into account its special structure and to develop the local convergence analysis of the SSGM algorithm. Because of the low memory requirement of the SSGM method, we hope it will perform well when applied to solve practical data fitting problems and imaging problems, this is also another subject for future research. Furthermore, SSGM method can be easily modified to address general large-scale nonlinear systems of equations directly without the least square framework.

\begin{acknowledgements}
\begin{itemize}
    \item We thank Professor Sandra Augusta Santos of the State University of Campinas (UNICAMP), Brazil, for providing us with useful comments and suggestions that we used for improving the earlier version of this manuscript.
    \item The first author is currently a sandwich PhD student at UNICAMP, SP, Brazil supported by the Tertiary Education Trust Fund (TETFund) Academic Staff Training and Development (AST\& D) intervention.
\end{itemize}
\end{acknowledgements}
% BibTeX users please use one of
%\bibliographystyle{spbasic}      % basic style, author-year citations
%\bibliographystyle{spmpsci}      % mathematics and physical sciences
%\bibliographystyle{spphys}       % APS-like style for physics

 \bibliographystyle{jamc}

\bibliography{ssgm}  % name your BibTeX data base

\begin{thebibliography}{10}

\bibitem{albaali03}
M.~Al-Baali.
\newblock Quasi-Newton algorithms for large-scale nonlinear least-squares.
\newblock In \textit{High Performance Algorithms and Software for Nonlinear
  Optimization}, pages 1--21. Springer (2003).

\bibitem{albaali85}
M.~Al-Baali and R.~Fletcher.
\newblock Variational methods for non-linear least-squares.
\newblock \textit{J Oper.l Res. Soc.}, 36 (1985) 405--421.

\bibitem{bartholomew77}
M.~C. Bartholomew-Biggs.
\newblock The estimation of the Hessian matrix in nonlinear least squares
  problems with non-zero residuals.
\newblock \textit{Math. Program.}, 12 (1977) 67--80.

\bibitem{bb1}
J.~Barzilai and J.~M. Borwein.
\newblock Two-point step size gradient methods.
\newblock \textit{IMA J. Numer. Anal.}, 8 (1988) 141--148.

\bibitem{betts76}
J.~T. Betts.
\newblock Solving the nonlinear least square problem: Application of a general
  method.
\newblock \textit{J. Optim. Theory Appl.}, 18 (1976) 469--483.

\bibitem{biglari13}
F.~Biglari and M.~Solimanpur.
\newblock Scaling on the spectral gradient method.
\newblock \textit{J. Optim. Theory Appl.}, 158 (2010) 626--635.

\bibitem{dennis71}
K.~M. Brown and J.~E. Dennis.
\newblock A New Algorithm for Nonlinear Least-Squares Curve Fitting.
\newblock \textit{Mathematical Software},  (1973) 291--396.

\bibitem{curtis16}
F.~E. Curtis and W.~Guo.
\newblock Handling nonpositive curvature in a limited memory steepest descent
  method.
\newblock \textit{IMA J. Numer. Anal.}, 36 (2016) 717--742.

\bibitem{bb6}
Y.~H. Dai, J.~Yuan and Y.~X. Yuan.
\newblock Modified two-point stepsize gradient methods for unconstrained
  optimization.
\newblock \textit{Comput. Optim. and Appl.}, 22 (2002) 103--109.

\bibitem{daizhang01}
Y.~H. Dai and H.~Zhang.
\newblock Adaptive two-point stepsize gradient algorithm.
\newblock \textit{Numer. Algorithms}, 27 (2001) 377--385.

\bibitem{dennis73}
J.~E. Dennis.
\newblock Some Computational Techniques for the Nonlinear Least Squares
  Problems.
\newblock \textit{The State of Art of Numerical Analysis},  (1973) 269--312.

\bibitem{seven}
J.~E. Dennis and R.~B. Schnabel, Jr.
\newblock \textit{Numerical Methods for Unconstrained Optimization and
  Nonlinear Equations}.
\newblock Prentice-Hall (1983).

\bibitem{twentyfive}
M.~J.~J. Dolan, E.~D.
\newblock Benchmarking optimization software with performance profiles.
\newblock \textit{Math. Program., Ser}, 91 (2002) 201--213.

\bibitem{fletcher87}
R.~Fletcher and C.~Xu.
\newblock Hybrid methods for nonlinear least squares.
\newblock \textit{IMA J. Numer. Anal.}, 7 (1987) 371--389.

\bibitem{golub2003}
G.~Golub and V.~Pereyra.
\newblock Separable nonlinear least squares: the variable projection method and
  its applications.
\newblock \textit{Inverse problems}, 19 (2003) R1.

\bibitem{douglass16}
D.~S. Gon{\c{c}}alves and S.~A. Santos.
\newblock Local analysis of a spectral correction for the Gauss-Newton model
  applied to quadratic residual problems.
\newblock \textit{Numer. Algorithms}, 73 (2016) 407--431.

\bibitem{bb4}
L.~Grippo, F.~Lampariello and S.~Lucidi.
\newblock A nonmonotone line search technique for Newton's method.
\newblock \textit{SIAM J. Numer. Anal.}, 23 (1986) 707--716.

\bibitem{hartley61}
H.~O. Hartley.
\newblock The modified Gauss-Newton method for the fitting of non-linear
  regression functions by least squares.
\newblock \textit{Technometrics}, 3 (1961) 269--280.

\bibitem{ziliri81}
S.~Incerti, F.~Zirilli and V.~Parisi.
\newblock Fortran subroutine for solving systems of nonlinear simultaneous
  equations: Algorithm 111.
\newblock \textit{Computer Journal}, 24 (1981) 87--91.

\bibitem{jamil13}
M.~Jamil and X.~S. Yang.
\newblock A literature survey of benchmark functions for global optimisation
  problems.
\newblock \textit{International Journal of Mathematical Modelling and Numerical
  Optimisation}, 4 (2013) 150--194.

\bibitem{kafaki13}
S.~B. Kafaki and M.~Fatemi.
\newblock A modified two-point stepsize gradient algorithm for unconstrained
  minimization.
\newblock \textit{Optim. Meth. Soft.}, 28 (2013) 1040--1050.

\bibitem{kim07}
S.~Kim, K.~Koh, M.~Lustig, S.~Boyd and D.~Gorinevsky.
\newblock An Interior-Point Method for Large-Scale $\ell _1 $-Regularized Least
  Squares.
\newblock \textit{IEEE journal of selected topics in signal processing}, 1
  (2007) 606--617.

\bibitem{knoll04}
D.~A. Knoll and D.~E. Keyes.
\newblock Jacobian-free Newton--Krylov methods: a survey of approaches and
  applications.
\newblock \textit{J. Comput. Phys.}, 193 (2004) 357--397.

\bibitem{bb8}
W.~La~Cruz, J.~M. Mart{\'\i}nez and M.~Raydan.
\newblock Spectral residual method without gradient information for solving
  large-scale nonlinear systems.
\newblock \textit{Math. Comp.}, 75 (2006) 1429--1448.

\bibitem{levenberg44}
K.~Levenberg.
\newblock A method for the solution of certain non-linear problems in least
  squares.
\newblock \textit{Quarterly of Applied Mathematics}, 2 (1944) 164--168.

\bibitem{li2012}
J.~Li, F.~Ding and G.~Yang.
\newblock Maximum likelihood least squares identification method for input
  nonlinear finite impulse response moving average systems.
\newblock \textit{Mathematical and Computer Modelling}, 55 (2012) 442--450.

\bibitem{liu17}
H.~Liu, Z.~Liu and X.~Dong.
\newblock A new adaptive Barzilai and Borwein method for unconstrained
  optimization.
\newblock \textit{Optim. Lett.}, 158 (2017).

\bibitem{liu18}
Z.~Liu and H.~Liu.
\newblock Several efficient gradient methods with approximate optimal stepsizes
  for large scale unconstrained optimization.
\newblock \textit{J. Comput. Appl. Math.}, 328 (2018) 400--413.

\bibitem{luengo03}
F.~Luengo and M.~Raydan.
\newblock Gradient method with dynamical retards for large-scale optimization
  problems.
\newblock \textit{Electron. Trans. Numer. Anal.}, 16 (2003) 186--193.

\bibitem{lukvsan96}
L.~Luk{\v{s}}an.
\newblock Hybrid methods for large sparse nonlinear least squares.
\newblock \textit{J. Optim. Theory Appl.}, 89 (1996) 575--595.

\bibitem{lukvsan03}
L.~Luk{\v{s}}an and J.~Vlcek.
\newblock Test problems for unconstrained optimization.
\newblock Technical Report Technical Report- 897, Academy of Sciences of the
  Czech Republic, Institute of Computer Science (2003).

\bibitem{marquardt63}
D.~W. Marquardt.
\newblock An algorithm for least-squares estimation of nonlinear parameters.
\newblock \textit{Journal of the Society for Industrial and Applied
  Mathematics}, 11 (1963) 431--441.

\bibitem{keown75}
J.~J. McKeown.
\newblock Specialised versus general-purpose algorithms for minimising
  functions that are sums of squared terms.
\newblock \textit{Math.l Program.}, 9 (1975) 57--68.

\bibitem{more1981}
J.~J. Mor{\'e}, B.~S. Garbow and K.~E. Hillstrom.
\newblock Testing unconstrained optimization software.
\newblock \textit{ACM Transactions on Mathematical Software}, 7 (1981) 17--41.

\bibitem{nazareth80}
L.~Nazareth.
\newblock Some recent approaches to solving large residual nonlinear least
  squares problems.
\newblock \textit{Siam Rev.}, 22 (1980) 1--11.

\bibitem{bb2}
M.~Raydan.
\newblock On the Barzilai and Borwein choice of steplength for the gradient
  method.
\newblock \textit{IMA J. Numer. Anal.}, 13 (1993) 321--326.

\bibitem{bb3}
M.~Raydan.
\newblock The Barzilai and Borwein gradient method for the large scale
  unconstrained minimization problem.
\newblock \textit{SIAM J. Optim.}, 7 (1997) 26--33.

\bibitem{santos14}
S.~A. Santos and R.~Silva.
\newblock An inexact and nonmonotone proximal method for smooth unconstrained
  minimization.
\newblock \textit{J. Comput. Appl. Math.}, 269 (2014) 86--100.

\bibitem{spedicato88}
E.~Spedicato and M.~T. Vespucci.
\newblock Numerical experiments with variations of the Gauss-Newton algorithm
  for nonlinear least squares.
\newblock \textit{J. Optim. Theory Appl.}, 57 (1988) 323--339.

\bibitem{sunyuan}
W.~Sun and Y.~X. Yuan.
\newblock \textit{Optimization theory and methods: Nonlinear programming}.
\newblock Springer Science (2006).

\bibitem{wangliqi10}
F.~Wang, D.~H. Li and L.~Qi.
\newblock Global Convergence of Gauss-Newton-MBFGS Method for Solving the
  Nonlinear Least Squares Problem.
\newblock \textit{Advan. Model. Optim.}, 12 (2010) 1--18.

\bibitem{bb10}
Y.~Xiao, Q.~Wang and D.~Wang.
\newblock Notes on the Dai--Yuan--Yuan modified spectral gradient method.
\newblock \textit{J. Comput. Aapl. Math}, 234 (2010) 2986--2992.

\bibitem{xu2012}
W.~Xu, T.~F. Coleman and G.~Liu.
\newblock A secant method for nonlinear least-squares minimization.
\newblock \textit{Comput. Optim. Appl.}, 51 (2012) 159--173.

\bibitem{xu2016}
W.~Xu, N.~Zheng and K.~Hayami.
\newblock Jacobian-Free Implicit Inner-Iteration Preconditioner for Nonlinear
  Least Squares Problems.
\newblock \textit{J. Sci. Comp.}, 68 (2016) 1055--1081.

\bibitem{zhang04}
H.~Zhang and W.~W. Hager.
\newblock A nonmonotone line search technique and its application to
  unconstrained optimization.
\newblock \textit{SIAM J. Optim.}, 14 (2004) 1043--1056.

\bibitem{zhang1999}
J.~Z. Zhang, N.~Y. Deng and L.~H. Chen.
\newblock New quasi-{N}ewton equation and related methods for unconstrained
  optimization.
\newblock \textit{J. Optim. Theory Appl.}, 102 (1999) 147--167.

\end{thebibliography}

\end{document}